\newtheorem{thm}{Theorem}
\newtheorem{lem}{Lemma}
\newtheorem{prop}{Proposition}
\newtheorem{cor}{Corollary} 
\title{Finite difference method for a Volterra equation with a power-type nonlinearity}
\author{Hanna Okrasi{\'n}ska-P{\l}ociniczak\thanks{Department of Mathematics, Wroclaw University of Environmental and Life Sciences, ul. C.K. Norwida 25, 50-275 Wroclaw, Poland}$\;$, \L ukasz P\l ociniczak\thanks{Faculty of Pure and Applied Mathematics, Wroc{\l}aw University of Science and Technology, Wyb. Wyspia{\'n}skiego 27, 50-370 Wroc{\l}aw, Poland}$\;^,$\footnote{Email: lukasz.plociniczak@pwr.edu.pl, \underline{Corresponding Author}}}
\date{}
\begin{document}
\maketitle

\begin{abstract}
In this work we prove that a family of explicit numerical finite-difference methods is convergent when applied to a nonlinear Volterra equation with a power-type nonlinearity. In that case the kernel is not of Lipschitz type, therefore the classical analysis cannot be applied. We indicate several difficulties that arise in the proofs and show how they can be remedied. The tools that we use consist of variations on discreet Gronwall's lemmas and comparison theorems. Additionally, we give an upper bound on the convergence order. We conclude the paper with a construction of a convergent method and apply it for solving some examples. \\

\noindent\textbf{Keywords}: Volterra equation, nonlinearity, power-type, finite difference method
\end{abstract}

\section{Introduction}
Integral equations are one of most useful tools used in mathematical analysis and modelling. Many essential problems formulated as differential equations can be transformed into the equivalent integral equations which, many times, are more useful for showing existence, uniqueness and deriving estimates. This field has reached its maturity and there is a wealth of literature reviewing many subjects such as the theory for Volterra \cite{Bru17,Gri90,Cor91} and Fredholm equations \cite{Tri57,Sta00,Kre89}, numerical methods \cite{Atk76,Lin85,Bru17,Hac12,Bru04,Bru84} and topics in integro-differential equations \cite{Caf09,O12} and fractional calculus \cite{Mil93,Gor97,Mai10,Sam93}.

In this work we consider a family of numerical finite-difference methods for solving a certain class of nonlinear Volterra equations. These, in particular, arise as models of dynamics in porous media \cite{Vaz07,Okr89,Okr95}, heat transfer \cite{Man51}, propagation of shock-waves in gas filled tubes \cite{Kel81} and anomalous diffusion \cite{Plo14,Plo13}. The latter application is the main motivation behind our investigations (more details can be found in \cite{Plo15}). The general form of the family of equations that we consider can be written as follows
\begin{equation}
	u(x) = \int_0^x K(x,t) u(t)^\frac{1}{m+1} dt, \quad m>0, \quad x\in[0,X].
\label{eqn:IntEq0}
\end{equation}
Notice that the nonlinearity is not Lipschitz so that we cannot use the most of the classical theory. However, there are many results concerning existence and uniqueness. In the seminal papers \cite{Bus76,Gri81} the problem was stated and several conditions for existence and uniqueness were given. Later, in a number of work those results were strengthened and generalized (see \cite{Okr89,Bus86,Bus89,Myd91,Nie97}). Abel integral equations with power-type nonlinearities have also been recently investigated \cite{Buc05,Wan14}. For further details see other works of cited authors. 

According to the best of authors' knowledge, the literature consists of only very few papers concerning numerical methods for this kind of equations. In \cite{Buc97} a iterative fixed-point way of solving (\ref{eqn:IntEq0}) was proposed. Also, in \cite{Cor00} a short review article concerning theoretical and numerical concepts of nonlinear Volterra equation has been published. 

In what follows we consider a family of explicit methods for solving (\ref{eqn:IntEq0}) and prove that under certain boundedness assumptions on the kernel they are convergent. Moreover, we find a bound on the convergence rate and illustrate the theory with the numerical simulations. 	

\section{Finite difference method}
Consider the following nonlinear Volterra equation which arises from (\ref{eqn:IntEq0}) via the transformation $y(x)^{m+1} = u(x)$
\begin{equation}
	y(x)^{m+1} = \int_0^x K(x,t) y(t) dt, \quad m > 0, \quad x\in[0,X].
\label{eqn:IntEq}
\end{equation}
We will need the boundedness assumption on the kernel $K\in C([0,X]^2)$
\begin{equation}
	0<C\leq K(x,t) \leq D,
\label{eqn:KBound}
\end{equation}
from which only the lower one is a nontrivial requirement. It can be shown that the positive solution of (\ref{eqn:IntEq}) with (\ref{eqn:KBound}) exists and is unique (see \cite{Bus76} and other papers mentioned in the Introduction). 

In order to construct the numerical method fix a natural number $N$ and introduce the grid 
\begin{equation}
	x_n = n h, \quad h = \frac{X}{N}, \quad n = 0,1,2,...,N.
\end{equation}
The next step is to discretize the integral in (\ref{eqn:IntEq}), say
\begin{equation}
	\int_0^{x_n} K(x_n,t) y(t) dt = h \sum_{i=1}^{n-1} w_{n,i} K(x_n,x_i) y(x_i) + \delta_n(h),
\label{eqn:LocCons}
\end{equation}
where $\delta_n(h)$ is the local consistency error. Next, we define 
\begin{equation}
	\delta(h) := \max_{1\leq n\leq N} |\delta_n(h)|,
\label{eqn:delta}
\end{equation}
and further assume that
\begin{equation}
	0<w_{n,i} \leq W, \quad n,i = 0,1,2,...,N,
\label{eqn:W}
\end{equation}
for some $W>0$. Denoting $y_n$ as a numerical approximation to $y(nh)$ we may then propose the following \emph{explicit} finite difference scheme for solving (\ref{eqn:IntEq})
\begin{equation}
	y_n^{m+1} = h \sum_{i=1}^{n-1} w_{n,i} K_{n,i} \; y_i, \quad n=2,3,...,N,
\label{eqn:NumMet}
\end{equation} 
where $K_{n,i} := K(x_n,x_i)$. Since the equation (\ref{eqn:IntEq}) has a trivial solution, it is necessary to start the above iteration with a value which will force the convergence to the nontrivial one. In the next section we will show one way of making that choice.

Below we will prove that (\ref{eqn:NumMet}) is convergent to the unique positive solution of (\ref{eqn:IntEq}). Before that, however, we need some auxiliary results. The first is a simple observation concerning the iteration scheme (\ref{eqn:NumMet}).
\begin{prop}
Let $y$ be a solution of (\ref{eqn:IntEq}) and construct $y_n$ via the iteration (\ref{eqn:NumMet}). If $\delta_n(h)$ in (\ref{eqn:LocCons}) is positive (negative) for $n=1,2,...,N$, then $y(n h) \geq y_n$ ($y(nh) \leq y_n$) provided that $y(h) \geq y_1$ ($y(h)\leq y_1$).
\label{prop:Mono}
\end{prop}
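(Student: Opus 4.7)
\medskip

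\noindent\textbf{Proof plan.} The statement asks us to compare the grid values of the exact solution with the iterates produced by (\ref{eqn:NumMet}), so the natural approach is straightforward induction on $n$ using a subtracted identity. Concretely, plugging (\ref{eqn:LocCons}) into (\ref{eqn:IntEq}) and subtracting (\ref{eqn:NumMet}) yields
\begin{equation*}
y(x_n)^{m+1} - y_n^{m+1} \;=\; h\sum_{i=1}^{n-1} w_{n,i}\, K_{n,i}\,\bigl(y(x_i)-y_i\bigr) \;+\; \delta_n(h),
\end{equation*}
which is the identity that drives everything.

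\noindent I would then argue by induction on $n$ in the case $\delta_n(h)\geq 0$. The base $n=1$ is exactly the hypothesis $y(h)\geq y_1$. Assuming $y(x_i)\geq y_i$ for $i=1,\dots,n-1$, the displayed identity combined with the positivity $w_{n,i}>0$ from (\ref{eqn:W}), the strict lower bound $K_{n,i}\geq C>0$ from (\ref{eqn:KBound}), and the sign assumption $\delta_n(h)\geq 0$ forces $y(x_n)^{m+1}\geq y_n^{m+1}$. To pass from this to $y(x_n)\geq y_n$ one invokes the strict monotonicity of $\xi\mapsto \xi^{m+1}$ on $[0,\infty)$, which for $m>0$ is elementary. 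The opposite case ($\delta_n(h)\leq 0$ and $y(h)\leq y_1$) is completely symmetric.

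\noindent\textbf{Expected obstacle.} The only subtle point is justifying that both $y(x_n)$ and $y_n$ lie in $[0,\infty)$ so that the $(m+1)$-th root may be taken monotonically; for non-integer $m+1$ this is essential. For the exact solution it is given by the assumption that we work with the positive branch. For the numerical sequence it follows from a side induction on the scheme itself: once $y_1\geq 0$, the right-hand side of (\ref{eqn:NumMet}) is a sum of nonnegative terms, so $y_n$ is well-defined as a nonnegative real root. After this observation, the induction closes without further work.
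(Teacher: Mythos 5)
Your proof is correct and follows essentially the same route as the paper: induction on $n$, using positivity of $w_{n,i}$ and $K_{n,i}$ together with the sign of $\delta_n$ to compare $y(x_n)^{m+1}$ with $y_n^{m+1}$, and then invoking monotonicity of $\xi\mapsto\xi^{m+1}$. The only difference is cosmetic (you phrase the step as a subtracted identity rather than a chain of inequalities) and that you explicitly flag the nonnegativity needed to take the $(m+1)$-th root, which the paper leaves implicit.
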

\begin{proof}
It suffices to consider only the case $\delta_n\geq 0$ for all $n=1,2,...,N$. Since by assumption we have $y(h) \geq y_1$ then, by induction, we can assume that the assertion holds for the $n$ first terms. The inductive step can be conducted as follows
\begin{equation}
\begin{split}
	y((n+1)h)^{m+1} &= \int_0^{(n+1)h} K((n+1)h,t) y(t) dt = h\sum_{i=1}^{n} w_{n+1,i} K_{n+1,i} \; y(ih) + \delta_{n+1}(h) \\
	&\geq h\sum_{i=1}^{n} w_{n+1,i} K_{n+1,i} \; y_i + \delta_{n+1}(h) \geq y_{n+1}^{m+1}.
\end{split}
\end{equation}
The first inequality is a consequence of the inductive assumption, while in the second we use the fact that $\delta_{n+1} \geq 0$ and (\ref{eqn:NumMet}). It follows that $y((n+1)h) \geq y_{n+1}$ what ends the proof.
\end{proof}

Next, we prove a comparison theorem that can be thought as a generalization of the Gronwall-Bellman's Lemma.
\begin{lem}
\label{lem:LemGron}
	Let $y\in C{[0,1]}$ be a positive function satisfying
	\begin{equation}
		y(x)^{m+1} \lessgtr C x^\mu \int_0^x t^\nu y(t) dt,
	\end{equation}
	with $\nu,\mu,m>0$. Then
	\begin{equation}
		y(x) \lessgtr \left(C\;\frac{m}{m+1}\frac{1}{1+\nu+\frac{\mu}{m+1}}\right)^\frac{1}{m} x^\frac{1+\nu+\mu}{m},
	\end{equation}
where the sign $\lessgtr$ denotes either less or greater than.
\end{lem}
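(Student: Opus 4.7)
The plan is to reduce the given integral inequality to a separable first-order differential inequality for a suitable antiderivative, solve it in closed form, and then feed the resulting bound back into the hypothesis to isolate $y$. First I would introduce the auxiliary function
\[
Y(x) := \int_0^x t^\nu y(t)\, dt,
\]
so that $Y(0)=0$ and, because $y$ is continuous and positive on $(0,1]$, $Y\in C^1(0,1]$ with $Y'(x)=x^\nu y(x)>0$. Taking the $(m+1)$-th root of the hypothesis yields the pointwise bound
\[
y(x) \lessgtr C^{1/(m+1)}\, x^{\mu/(m+1)}\, Y(x)^{1/(m+1)},
\]
and substituting this into $Y'=x^\nu y$ produces the Bernoulli-type differential inequality
\[
Y'(x) \lessgtr C^{1/(m+1)}\, x^{\,\nu+\mu/(m+1)}\, Y(x)^{1/(m+1)}.
\]

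Next I would divide by $Y(x)^{1/(m+1)}$, which is legal on $(0,1]$ since $Y$ is strictly positive there, and recognise the left-hand side as $\frac{d}{dx}\bigl[\frac{m+1}{m}\,Y(x)^{m/(m+1)}\bigr]$. Integrating from $0$ to $x$ and using $Y(0)=0$ together with continuity of the antiderivative at the origin gives
\[
\tfrac{m+1}{m}\, Y(x)^{m/(m+1)} \lessgtr C^{1/(m+1)}\,\frac{x^{1+\nu+\mu/(m+1)}}{1+\nu+\mu/(m+1)},
\]
so $Y(x)$ is bounded by an explicit power of $x$. Feeding this bound back into the original inequality $y(x)^{m+1}\lessgtr C x^\mu Y(x)$ and taking the $(m+1)$-th root produces a power of $x$ with exponent
\[
\frac{1}{m+1}\!\left(\mu + \frac{(m+1)(1+\nu)+\mu}{m}\right) = \frac{1+\nu+\mu}{m},
\]
which matches the claimed estimate. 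Collecting all factors of $C$ and the combinatorial prefactors from the two integrations shows that the multiplicative constant simplifies exactly to $\bigl(C\,\tfrac{m}{m+1}\,\tfrac{1}{1+\nu+\mu/(m+1)}\bigr)^{1/m}$.

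The main obstacle I anticipate is purely technical: one must justify the separation of variables near $x=0$, where both $Y$ and $Y'$ vanish. Positivity of $y$ on $(0,1]$ supplies $Y(x)>0$ there, so division by $Y^{1/(m+1)}$ is legitimate; the antiderivative $\tfrac{m+1}{m}Y^{m/(m+1)}$ is continuous at the origin with value $0$, which legalises taking the lower limit of integration to be $0$ directly (or, equivalently, integrating from $\varepsilon>0$ and letting $\varepsilon\to 0^+$). Beyond this, the proof is a matter of matching exponents, and the sharpness of the constant is pre-explained by the fact that, in the equality case, $y(x)=A x^{(1+\nu+\mu)/m}$ with $A$ precisely the claimed prefactor solves the problem exactly.
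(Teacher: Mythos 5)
Your proof is correct and follows essentially the same route as the paper: introduce $Y(x)=\int_0^x t^\nu y(t)\,dt$, convert the hypothesis into the Bernoulli-type inequality $Y'\lessgtr C^{1/(m+1)}x^{\nu+\mu/(m+1)}Y^{1/(m+1)}$, integrate by separating variables, and feed the resulting bound on $Y$ back into the original estimate for $y$. The only cosmetic difference is that the paper substitutes into the pre-rooted form $y\lessgtr C^{1/(m+1)}x^{\mu/(m+1)}Y^{1/(m+1)}$ rather than into $y^{m+1}\lessgtr Cx^\mu Y$, and your remarks about justifying the division by $Y^{1/(m+1)}$ near the origin make the argument slightly more explicit than the paper's.
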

\begin{proof}
The proof will proceed for the $\geq$-case. The reasoning for the other follows exactly the same route. Let $y$ be as in the assumption, and denote
\begin{equation}
	z(x) := \int_0^x t^\nu y(t) dt,
\end{equation}
with $z(0)=0$. We have then 
\begin{equation}
	y(x) \geq C^\frac{1}{m+1} x^\frac{\mu}{m+1} z(x)^\frac{1}{m+1},
\label{eqn:yEst}
\end{equation}
and therefore
\begin{equation}
	z'(x) = x^\nu y(x) \geq C^\frac{1}{m+1} x^{\nu+\frac{\mu}{m+1}} z(x)^\frac{1}{m+1}.
\end{equation}
After division by $z^\frac{1}{m+1}$ we can integrate to obtain
\begin{equation}
	\int_0^x z(t)^{-\frac{1}{m+1}}z'(t)dt \geq C^\frac{1}{m+1} \int_0^x t^{\nu+\frac{\mu}{m+1}} dt,
\end{equation}
whence
\begin{equation}
	\frac{m+1}{m} z(x)^{\frac{m}{m+1}} \geq C^\frac{1}{m+1} \frac{1}{1+\nu+\frac{\mu}{m+1}} x^{1+\nu+\frac{\mu}{m+1}}.
\end{equation}
Using the above estimate in (\ref{eqn:yEst}) implies
\begin{equation}
	y(x) \geq \left(C\;\frac{m}{m+1}\frac{1}{1+\nu+\frac{\mu}{m+1}}\right)^\frac{1}{m} x^\frac{1+\nu+\mu}{m}.
\end{equation}
This ends the proof.
\end{proof}

In a similar vein we can prove the following asymptotic result. 
\begin{lem}
	Let $y\in C{[0,1]}$ be a positive solution of (\ref{eqn:IntEq}) with
	\begin{equation}
		K(x,t) \sim C x^\mu \quad \text{as} \quad x\rightarrow 0^+ \; \text{and} \; t \leq x.
	\end{equation}
	Then
	\begin{equation}
	y(x) \sim \left(C\;\frac{m}{m+1}\frac{1}{1+\frac{\mu}{m+1}}\right)^\frac{1}{m} x^\frac{1+\mu}{m}, \quad x\rightarrow 0^+.
	\end{equation}
\label{lem:Asym}
\end{lem}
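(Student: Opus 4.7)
The plan is to reduce this to Lemma \ref{lem:LemGron} with $\nu=0$ by trapping $K$ between two power functions on a small neighbourhood of the origin. First I would unpack the hypothesis $K(x,t)\sim C x^\mu$: interpreting it as uniform asymptotic equivalence for $t\in[0,x]$, for every $\varepsilon>0$ there exists $\delta=\delta(\varepsilon)>0$ such that
\begin{equation*}
(C-\varepsilon)\,x^\mu \;\leq\; K(x,t)\;\leq\;(C+\varepsilon)\,x^\mu,\qquad 0<x\leq\delta,\; 0\leq t\leq x.
\end{equation*}
Plugging these into (\ref{eqn:IntEq}) and using positivity of $y$, I would obtain the two-sided integral inequality
\begin{equation*}
(C-\varepsilon)\,x^\mu\int_0^x y(t)\,dt \;\leq\; y(x)^{m+1} \;\leq\; (C+\varepsilon)\,x^\mu\int_0^x y(t)\,dt,
\end{equation*}
valid for all $x\in(0,\delta]$.

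Next I would apply Lemma \ref{lem:LemGron} with $\nu=0$ to each side separately, noting that the proof of that lemma only requires the integral inequality to hold on the interval under consideration, so it transfers verbatim from $[0,1]$ to $[0,\delta]$. This yields
\begin{equation*}
A_-(\varepsilon)\,x^{\frac{1+\mu}{m}} \;\leq\; y(x) \;\leq\; A_+(\varepsilon)\,x^{\frac{1+\mu}{m}},\qquad x\in(0,\delta],
\end{equation*}
where
\begin{equation*}
A_\pm(\varepsilon):=\left((C\pm\varepsilon)\,\frac{m}{m+1}\,\frac{1}{1+\frac{\mu}{m+1}}\right)^{\frac{1}{m}}.
\end{equation*}

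Dividing through by $x^{(1+\mu)/m}$ and taking the liminf and limsup as $x\to 0^+$ sandwiches $\liminf y(x)/x^{(1+\mu)/m}$ and $\limsup y(x)/x^{(1+\mu)/m}$ between $A_-(\varepsilon)$ and $A_+(\varepsilon)$. Since $\varepsilon>0$ is arbitrary and $A_\pm(\varepsilon)\to A_\pm(0)$ (the claimed asymptotic constant) as $\varepsilon\to 0^+$, both limits coincide and the asymptotic follows. The only real obstacle is the bookkeeping around the first step: one must be confident that Lemma \ref{lem:LemGron} really applies on an arbitrary sub-interval $[0,\delta]$ rather than only on $[0,1]$ as literally written, and that the $\sim$ in the hypothesis is strong enough to give uniformity in $t\leq x$; both of these are minor and should be stated explicitly before invoking the previous lemma.
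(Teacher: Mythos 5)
Your proof is correct, but it takes a genuinely different route from the paper's. The paper does not sandwich $K$ or re-invoke Lemma~\ref{lem:LemGron}: instead it applies the Mean Value Theorem for integrals to pull the kernel out as $y(x)^{m+1} = K(x,\tau(x))\int_0^x y(t)\,dt$ with $\tau(x)\in[0,x]$, substitutes $z(x)=\int_0^x y$, and recognizes that $z$ then solves the separable ODE $z' = (K(x,\tau(x))\,z)^{1/(m+1)}$ \emph{exactly}. Solving this ODE in closed form yields an explicit representation $y(x) = K(x,\tau(x))^{1/(m+1)}\bigl(\tfrac{m}{m+1}\int_0^x K(t,\tau(t))^{1/(m+1)}dt\bigr)^{1/m}$, into which the asymptotic $K\sim Cx^\mu$ is then substituted. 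The paper's route buys an exact solution formula and bypasses any liminf/limsup bookkeeping; your route is more modular (reuses the comparison lemma rather than re-deriving the ODE) at the cost of an $\varepsilon$-squeeze argument. One detail you should address explicitly: you invoke Lemma~\ref{lem:LemGron} with $\nu=0$, but as stated that lemma requires $\nu>0$. The restriction is inessential --- the proof of Lemma~\ref{lem:LemGron} goes through verbatim when $\nu=0$, since $z(x)=\int_0^x t^0 y(t)\,dt$ is still well-defined, positive, and differentiable for $x>0$, and the exponent $1+\nu+\mu/(m+1)$ stays positive --- but you should say so rather than cite the lemma outside its stated hypotheses. Your remarks about needing uniformity of the asymptotic in $t\le x$, and about restricting to $[0,\delta]$, are both apt; the paper's MVT argument also needs that uniformity (since $\tau(x)$ is not known explicitly).
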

\begin{proof}
We start by deriving an representation of the solution to (\ref{eqn:IntEq}). By the Mean Value Theorem we have
\begin{equation}
	y(x)^{m+1} = K(x,\tau(x)) \int_0^x y(t) dt,
\end{equation}
because $y$ is positive while $0\leq \tau(x) \leq x$. Similarly as before we substitute
\begin{equation}
	z(x) := \int_0^x y(t) dt,
\end{equation}
and therefore 
\begin{equation}
	z'(x) = y(x) = \left(K(x,\tau(x)) z(x)\right)^\frac{1}{m+1}.
\end{equation}
Solving this differential equation yields
\begin{equation}
	z(x) = \left(\frac{m}{m+1}\int_0^x K(t,\tau(t))^\frac{1}{m+1}dt\right)^\frac{m+1}{m}.
\end{equation}
Finally,
\begin{equation}
	y(x) = K(x,\tau(x))^\frac{1}{m+1}\left(\frac{m}{m+1}\int_0^x K(t,\tau(t))^\frac{1}{m+1}dt\right)^\frac{1}{m}.
\end{equation}
If now $K(x,t)\sim C x^\mu$ as $x\rightarrow 0^+$ with $t\leq x$ we have
\begin{equation}
	y(x) \sim \left(C x^\mu\right)^\frac{1}{m+1}\left(C^\frac{1}{m+1}\frac{m}{m+1}\frac{1}{1+\frac{\mu}{m+1}}x^{1+\frac{\mu}{m+1}}\right)^\frac{1}{m}.
\end{equation}
The proof is complete.
\end{proof}

The second lemma is a discrete analogue of the above.
\begin{lem}
Let $\left\{y_n\right\}$, $n=1,2,...,N$ be a sequence of positive numbers satisfying the recurrence (\ref{eqn:NumMet}) with (\ref{eqn:KBound}) and (\ref{eqn:LocCons}). Assume that the initial value is chosen to satisfy
\begin{equation}
	y_1 \geq \left(1-\frac{m+1}{m} \frac{\epsilon(h)}{h^{1+\frac{1}{m}}}\right)\left(C \frac{m}{m+1} h \right)^\frac{1}{m},
\label{eqn:LemIterationInit}
\end{equation}
then we have
\begin{equation}
	y_n \geq \left(1-\frac{m+1}{m} \frac{\epsilon(h)}{h^{1+\frac{1}{m}}}\right)\left(C \frac{m}{m+1} nh \right)^\frac{1}{m} , \quad n\geq 2,
\label{eqn:LemIteration}
\end{equation}
where $\epsilon(h)$ is the maximal local consistency error (\ref{eqn:delta}) of the quadrature (\ref{eqn:LocCons}) applied to the function $t^\frac{1}{m}$.
\label{lem:LemIteration}
\end{lem}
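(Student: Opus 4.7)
The plan is to establish (\ref{eqn:LemIteration}) by strong induction on $n$, with the base case $n=1$ furnished by the hypothesis (\ref{eqn:LemIterationInit}). For the inductive step, fix $n\geq 2$ and assume the bound holds for all $i<n$. Writing $\alpha := 1-\frac{m+1}{m}\epsilon(h)/h^{1+1/m}$ and substituting the inductive hypothesis into the recurrence (\ref{eqn:NumMet}), the positivity of the weights and kernel values yields
\begin{equation*}
y_n^{m+1} \;\geq\; \alpha\left(\frac{Cm}{m+1}\right)^{\!1/m} h\sum_{i=1}^{n-1} w_{n,i} K_{n,i}(ih)^{1/m}.
\end{equation*}

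The second step is to exploit that $\epsilon(h)$ is defined precisely as the local quadrature error for the function $t^{1/m}$. This lets me rewrite the discrete sum as
\begin{equation*}
h\sum_{i=1}^{n-1} w_{n,i} K_{n,i}(ih)^{1/m} \;=\; \int_0^{x_n} K(x_n,t)\,t^{1/m}\,dt \;-\; \epsilon_n(h),
\end{equation*}
and then bound below using $K(x_n,t)\geq C$ and $|\epsilon_n(h)|\leq\epsilon(h)$, producing $\frac{Cm}{m+1}(nh)^{1+1/m}-\epsilon(h)$. Comparing with the desired conclusion raised to the $(m+1)$-th power, the inductive step closes once the scalar inequality
\begin{equation*}
(1-\alpha^m)\,\frac{Cm}{m+1}(nh)^{1+1/m} \;\geq\; \epsilon(h)
\end{equation*}
is verified for all $n\geq 2$.

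The main obstacle is precisely this last estimate. Invoking the explicit form of $\alpha$, it rearranges into a comparison between $1-\alpha^m$ and $1-\alpha$, and the behaviour splits by the value of $m$. When $m\geq 1$ one has $\alpha^m\leq\alpha$ on $[0,1]$, so $1-\alpha^m\geq 1-\alpha$ and the additional factor $(nh)^{1+1/m}\geq (2h)^{1+1/m}$ carries the bound through, at least for $h$ small enough that $\alpha$ is close to $1$. For $0<m<1$ the naive inequality runs the wrong way, and I would instead use the mean value identity $1-\alpha^m = m\xi^{m-1}(1-\alpha)$ with $\xi\in(\alpha,1)$, bounding $\xi$ away from $0$ by requiring $h$ be sufficiently small. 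Handling this elementary but case-sensitive comparison uniformly in $n$ is the real technical heart of the argument; everything else is bookkeeping on the induction and substitution of the definition of $\epsilon(h)$.
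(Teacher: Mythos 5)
Your overall strategy (strong induction, substitute the inductive hypothesis into the recurrence, replace the discrete sum by an integral minus the local consistency error, and reduce to a scalar inequality) is the same as the paper's, but the order in which you perform two operations introduces a spurious factor of $1/C$ that breaks the final estimate. The paper first uses $K_{n,i}\geq C$ to extract a factor of $C$ from the sum, so that the remaining expression $h\sum_i w_{n,i}(ih)^{1/m}$ is a \emph{weights-only} quadrature of $t^{1/m}$; the error $\epsilon_{n+1}(h)$ of that quadrature is then multiplied by the same $C$ that multiplies the integral $\frac{m}{m+1}((n+1)h)^{(m+1)/m}$. When you divide through to obtain the reduced scalar inequality, the $C$'s cancel and you are left with the $C$-independent condition
\begin{equation*}
1-\frac{m+1}{m}\,\frac{\epsilon_{n+1}(h)}{\big((n+1)h\big)^{\frac{m+1}{m}}}\;\geq\;\alpha^m,
\end{equation*}
which follows from $\epsilon_{n+1}\leq\epsilon(h)$, $(n+1)h\geq h$, and $\alpha^m\leq\alpha$ for $m\geq 1$, $\alpha\in[0,1]$. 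Your version instead converts the \emph{full} sum (kernel included) to $\int_0^{x_n}K(x_n,t)\,t^{1/m}\,dt-\epsilon_n(h)$ and only afterwards bounds the integral below by $C\int t^{1/m}\,dt$. Because the subtracted $\epsilon_n(h)$ does not then carry a compensating factor $C$, your reduced inequality becomes $(1-\alpha^m)\,C\,n^{1+1/m}\geq 1-\alpha$, and the factor $C$ no longer cancels: for $m\geq 1$ and $n=2$ this requires roughly $C\gtrsim 2^{-(1+1/m)}$, which is not guaranteed by \eqref{eqn:KBound}. (Also note that your $\epsilon_n(h)$, as you use it, is the error for the integrand $K(x_n,\cdot)t^{1/m}$, a different quantity from the error for $t^{1/m}$ alone that the paper's proof uses and that feeds into $\alpha$; this conflation is what makes the stray $C$ appear.)

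Two smaller points. First, the appeal to ``$h$ small enough that $\alpha$ is close to $1$'' does not rescue the estimate: both sides of $(1-\alpha^m)\,Cn^{1+1/m}\geq 1-\alpha$ vanish linearly as $\alpha\to1^-$ with ratio tending to $mCn^{1+1/m}$, so smallness of $h$ does not remove the dependence on $C$. Second, you correctly identify the scalar inequality as the technical heart and discuss the split on $m$, but you do not actually close either case; the paper handles $m\geq1$ in one line via $\alpha^m\leq\alpha$, precisely because after the correct decomposition there is nothing left to tune. If you reorganise the argument so that $K\geq C$ is applied to the discrete sum before the quadrature-to-integral step, your proof collapses to the paper's and the remaining verification becomes elementary.
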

\begin{proof}
Due to the assumption we can proceed to the inductive step. To this end, let $y_i$ satisfy (\ref{eqn:LemIteration}) for every $i=1,2,...n$. Observe that from (\ref{eqn:NumMet}) with (\ref{eqn:KBound}) we have
\begin{equation}
	y_{n+1}^{m+1} \geq C\left(1-\frac{m+1}{m} \frac{\epsilon(h)}{h^{1+\frac{1}{m}}}\right)\left(C \frac{m}{m+1}\right)^\frac{1}{m} h \sum_{i=1}^{n} w_{n,i} (i h)^\frac{1}{m}.
\end{equation}
Because the sum above is a discretization of the integral (\ref{eqn:LocCons}) we can further write
\begin{equation}
\begin{split}
y_{n+1}^{m+1} &\geq C\left(1-\frac{m+1}{m} \frac{\epsilon(h)}{h^{1+\frac{1}{m}}}\right)\left(C \frac{m}{m+1}\right)^\frac{1}{m} \left(\int_0^{(n+1)h} t^\frac{1}{m}dt - \epsilon_{n+1}(h)\right) \\
& =  \left(1-\frac{m+1}{m}\frac{\epsilon(h)}{h^{1+\frac{1}{m}}}\right)\left(C \frac{m}{m+1}(n+1)h\right)^\frac{m+1}{m}\left(1-\frac{m}{m+1}\frac{\epsilon_{n+1}(h)}{\left((n+1)h\right)^\frac{m+1}{m}}\right),
\end{split}
\end{equation}
where $\epsilon_{n+1}$ is the $n+1$th local consistency error for the quadrature. Now, in order to complete the induction we have to show that
\begin{equation}
	 1-\frac{m}{m+1}\frac{\epsilon_{n+1}(h)}{\left((n+1)h\right)^\frac{m+1}{m}}\geq \left(1-\frac{m+1}{m} \frac{\epsilon(h)}{h^{1+\frac{1}{m}}}\right)^m.
\end{equation}
Since by definition, $\epsilon_{n+1} \leq \epsilon$ and obviously $(n+1)h\geq h$ we further have
\begin{equation}
1-\frac{m}{m+1}\frac{\epsilon(h)}{h^\frac{m+1}{m}}\geq \left(1-\frac{m+1}{m} \frac{\epsilon(h)}{h^{1+\frac{1}{m}}}\right)^m.
\end{equation}
Now, due to convexity we have the inequality $(1-x)^m \leq 1-x$ for $m\geq 1$ which grants the validity of the above estimate. This ends the proof.
\end{proof}
Finally, we prove a variant of the discreet generalization of the Gronwall-Bellman's lemma. 
\begin{lem}
\label{lem:LemRecur}
	Let $\left\{e_n\right\}$, $n=1,2,...$ be a sequence of positive numbers satisfying 
	\begin{equation}
		e_n \leq \frac{1}{n} \left(A \sum_{i=1}^{n-1} e_i + B\right), \quad n\geq 2,
	\label{eqn:LemRecur}
	\end{equation}
	for $A > 0$. If we define
		\begin{equation}
		M := \max\{e_1,B\}\times\left\{
		\begin{array}{ll}
		\dfrac{1}{-A \zeta(1-A)}, & 0<A<1 \\
		1, & A\geq 1,
		\end{array}
		\right.
	\label{eqn:LemRecurAssum}
	\end{equation}
	then
	\begin{equation}
		e_n \leq \frac{M}{n^{1-A}}.
		\label{eqn:LemRecurSol}
	\end{equation}
\end{lem}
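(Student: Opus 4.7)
The plan is induction on $n$ with hypothesis $e_n \leq M n^{A-1}$. For the base case $n=1$ I would need $M \geq e_1$, which is immediate for $A \geq 1$ and follows for $0 < A < 1$ from the observation that $-A\zeta(1-A) \leq 1$ on $(0,1)$ (check the boundary behaviour using $\zeta(s) \sim 1/(s-1)$ as $s \to 1^-$ and $\zeta(0) = -1/2$), so the prefactor $1/(-A\zeta(1-A))$ appearing in $M$ is at least one and hence $M \geq e_1$.

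For the inductive step I would plug the hypothesis into (\ref{eqn:LemRecur}) to obtain $n e_n \leq A M S_{n-1} + B$ with $S_{n-1} := \sum_{i=1}^{n-1} i^{A-1}$, reducing the claim to the single inequality $A S_{n-1} + B/M \leq n^A$. In the range $A \geq 1$ this is easy: $x^{A-1}$ is nondecreasing, so $S_{n-1} \leq \int_1^n x^{A-1}\,dx = (n^A-1)/A$ by integral comparison, and the bound $B/M \leq 1$ closes the gap.

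The substantive case is $0 < A < 1$. My plan is to establish the sharp estimate
\begin{equation*}
S_{n-1} \leq \frac{n^A}{A} + \zeta(1-A),
\end{equation*}
by introducing the auxiliary sequence $c_n := \sum_{i=1}^{n} i^{A-1} - n^A/A$. A mean-value-theorem computation should show $c_n$ is strictly decreasing (the MVT point lies in $(n-1,n)$ and $x^{A-1}$ is decreasing there), while its limit is $\zeta(1-A)$ via the standard analytic-continuation representation of $\zeta$ in the strip $0<s<1$. A telescoping tail estimate then gives $c_n - \zeta(1-A) \leq n^{A-1}$, and subtracting $n^{A-1}$ yields the displayed bound on $S_{n-1}$. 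Multiplying by $A$ and using $B/M \leq -A\zeta(1-A)$, which is precisely how $M$ is defined in this range, closes the induction.

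The principal obstacle I foresee is exactly this sharp partial-sum estimate: any cruder bound (for instance the naive $S_{n-1} \leq n^A/A$ from extending the integral down to $0$) inflates the constant required in $M$ beyond what is allowed, and the induction fails to close.
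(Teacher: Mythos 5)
Your proposal is correct and follows essentially the same route as the paper: induction with the same split into $A\geq 1$ (integral comparison) and $0<A<1$ (zeta-function estimate), closed in each case by the same use of the definition of $M$. The one substantive difference is that you supply a self-contained derivation of the partial-sum estimate via the monotone auxiliary sequence $c_n$, whereas the paper simply cites the Riemann--Siegel formula for it; your care here actually catches a small slip. The inequality as written in the paper, $\sum_{i=1}^{n} i^{A-1} \leq n^A/A + \zeta(1-A)$, is false for $0<A<1$ (the quantity $\sum_{i=1}^{n} i^{A-1} - n^A/A$ is \emph{decreasing} to its limit $\zeta(1-A)$, hence always strictly above it); the correct version is exactly the one you prove, $\sum_{i=1}^{n-1} i^{A-1} \leq n^A/A + \zeta(1-A)$, i.e.\ with $(n+1)^A$ replacing $n^A$ on the right, and this weaker bound is all the inductive step actually requires, so the lemma and the paper's conclusion stand.
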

\begin{proof}
First, notice that $-1<A \zeta(1-A) < 0$ for $0<A<1$ \cite{Tit86}. Therefore, the above bounds are well-defined. To prove them we proceed by induction. Assume (\ref{eqn:LemRecurAssum}) and notice that obviously we have $e_1 \leq M$. 

%

Next, we make the inductive step and assume that (\ref{eqn:LemRecurSol}) holds for all terms up to a fixed $n$. We will show that this assertion is also true for the $(n+1)$th term. First, assume that $A\geq 1$ and use the inductive assumption
\begin{equation}
	e_{n+1} \leq \frac{M}{n+1} \left(A \sum_{i=1}^{n} i^{A-1}+\frac{B}{M}\right).
\end{equation}
Now, we need an estimate for the above sum. Since the function $x^{A-1}$ is increasing, we can bound the series by an integral
\begin{equation}
	\sum_{i=1}^{n} i^{A-1} \leq \int_1^{n+1} x^{A-1} dx = \frac{1}{A}\left((n+1)^A-1\right),
\end{equation}
therefore
\begin{equation}
	e_{n+1} \leq \frac{M}{n+1} \left((n+1)^A-1+\frac{B}{M}\right) \leq \frac{M}{(n+1)^{1-A}},
\end{equation}
since $B\leq M$. On the other hand, for the case $0<A<1$ we have
\begin{equation}
	e_{n+1} \leq \frac{M}{n+1} \left(A\sum_{i=1}^{n}\frac{1}{i^{1-A}} + \frac{B}{M}\right).
\end{equation}
The appropriate bound for the sum comes from the Riemann-Siegel formula \cite{Tit86}, which gives
\begin{equation}
	\sum_{i=1}^{n}\frac{1}{i^{1-A}} \leq \frac{n^A}{A}  + \zeta(1-A),
\end{equation}
whence
\begin{equation}
	e_{n+1} \leq \frac{M}{n+1} \left(n^A+A\zeta(1-A)+\frac{B}{M}\right) \leq \frac{M}{n+1} (n+1)^A = \frac{M}{(n+1)^{1-A}},
\end{equation}
where the last inequality follows from the fact that $B \leq - M A\zeta(1-A)$. This concludes the reasoning.
\end{proof}
As we noted, the above two lemmas are generalizations of the Gronwall-Bellman's results. Similar theorems arise in many situations when considering integral equations and numerical methods used in solving them. There is a large collection of papers about various generalizations of the aforementioned inequalities from which the monograph \cite{Ame97} gives a thorough exposition of the continuous case. Many other results concerning the discreet version can be found for instance in \cite{Ye07,Tao03,Dix86,Mc82}.

We can now proceed to the main theorem. We claim that the numerical method (\ref{eqn:NumMet}) is locally convergent to the unique positive solution of (\ref{eqn:IntEq}). We difficulty of the proof lies in the non-Lipschitz behaviour of the nonlinearity in (\ref{eqn:IntEq}). The usual method do not work since we cannot make estimate which will invoke the standard Gronwall-Bellman's lemma. To overcome this difficulty we use a comparison result which helps us to bound the solution from below with a known function for which the numerical method is convergent. 

\begin{thm}
	Let $y=y(x)$ be the nontrivial solution of (\ref{eqn:IntEq}) with (\ref{eqn:KBound}). Moreover, by $e_n := y(nh) - y_n$ denote the error of the numerical scheme (\ref{eqn:LocCons})-(\ref{eqn:NumMet}) and assume that $\epsilon(h)h^{-\frac{m+1}{m}}<1$ as $h\rightarrow 0^+$, where $\epsilon(h)$ is defined in Lemma \ref{lem:LemIteration}. Then, if $y_1$ satisfies (\ref{eqn:LemIterationInit}) with $\left|e_1\right| = O(h^p)$ for some $p>0$, we have
	\begin{equation}
		\left|e_n\right| \leq  \text{const.} \times h^{1-\frac{1}{m}\frac{WD}{C E(h)}}\max\{h^p,\delta(h) h^{-1}\} \quad \text{as} \quad h\rightarrow 0^+ \quad \text{with} \quad nh \rightarrow x_n, 
	\label{eqn:Rate}
	\end{equation}
	where $E(h) := 1-\frac{m+1}{m}\epsilon(h)h^{-\frac{m+1}{m}} =O(1)$ as $h\rightarrow 0^+$.
\label{thm:Conv}
\end{thm}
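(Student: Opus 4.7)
The plan is to linearise the non-Lipschitz power $y^{m+1}$ via a mean value argument and reduce the resulting error recurrence to the discrete Gronwall form covered by Lemma~\ref{lem:LemRecur}. The lower bounds on the continuous and discrete solutions supplied by Lemmas~\ref{lem:LemGron} and~\ref{lem:LemIteration} are exactly what make that linearisation uniform in $n$.

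First, I would subtract the scheme (\ref{eqn:NumMet}) from the integral equation evaluated on the grid, using the consistency identity (\ref{eqn:LocCons}), to obtain
\begin{equation*}
y(nh)^{m+1} - y_n^{m+1} = h \sum_{i=1}^{n-1} w_{n,i} K_{n,i}\, e_i + \delta_n(h).
\end{equation*}
Since $f(y)=y^{m+1}$ is $C^{1}$ on $(0,\infty)$ and both $y_n>0$ and $y(nh)>0$, the mean value theorem gives $y(nh)^{m+1} - y_n^{m+1} = (m+1)\xi_n^{m}\, e_n$ for some $\xi_n \geq \min(y_n, y(nh))>0$.

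Next I would obtain a quantitative lower bound on $\xi_n$. Applying Lemma~\ref{lem:LemGron} with $\mu=\nu=0$ to the inequality $y(x)^{m+1} \geq C\int_{0}^{x} y(t)\,dt$ (which follows from $K \geq C$) yields $y(x) \geq (Cm/(m+1))^{1/m}\, x^{1/m}$, while Lemma~\ref{lem:LemIteration} furnishes the matching discrete estimate $y_n \geq E(h)\,(Cm/(m+1)\cdot nh)^{1/m}$, valid under the assumption $\epsilon(h)h^{-(m+1)/m}<1$. Taking $m$-th powers,
\begin{equation*}
(m+1)\xi_n^{m} \geq Cm\, E(h)^{m} \cdot nh.
\end{equation*}
Substituting into the error identity, bounding $w_{n,i} \leq W$ and $K_{n,i} \leq D$, and dividing by $(m+1)\xi_n^{m}$ produces
\begin{equation*}
|e_n| \leq \frac{1}{n}\left(\frac{WD}{Cm\, E(h)^{m}} \sum_{i=1}^{n-1} |e_i| + \frac{\delta(h)}{Cm\, E(h)^{m}\, h}\right),
\end{equation*}
which is precisely the hypothesis of Lemma~\ref{lem:LemRecur} with $A := WD/(Cm\,E(h)^{m})$ and $B := \delta(h)/(Cm\,E(h)^{m} h)$. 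The lemma then gives $|e_n| \leq M\, n^{A-1}$ with $M = O(\max\{|e_1|,B\}) = O(\max\{h^p,\delta(h)/h\})$. Finally, converting $n = x_n/h$ turns the factor $n^{A-1}$ into $x_n^{A-1}h^{1-A}$, yielding the rate (\ref{eqn:Rate}) (with $E(h)^{m}$ and $E(h)$ coinciding in the limit $h\to 0^{+}$, both being $O(1)$).

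The main obstacle is precisely the lower-bound step. The non-Lipschitz behaviour of the nonlinearity at the origin forces us to show that the linearisation coefficient $\xi_n^{m}$ not merely stays positive but grows like $nh$; only with this growth does the prefactor $1/n$ appear in the recurrence at exactly the strength that activates Lemma~\ref{lem:LemRecur}. A cruder bound on $\xi_n$ (say, just $\xi_n>0$) would either yield a coefficient blowing up with $n$ or destroy the $1/n$ scaling needed by the discrete Gronwall lemma, so securing the two matched comparison estimates is the crux of the argument.
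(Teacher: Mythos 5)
Your proof is correct and follows essentially the same route as the paper's: linearise the difference $y(nh)^{m+1}-y_n^{m+1}$ via the mean value theorem, bound the linearisation coefficient $\xi_n^m$ from below using the continuous comparison (Lemma~\ref{lem:LemGron}) and its discrete analogue (Lemma~\ref{lem:LemIteration}) to force the $\propto nh$ growth, and then apply the discrete Gronwall Lemma~\ref{lem:LemRecur}. One small and harmless difference worth flagging: the paper explicitly splits into the two cases $y_n\le y(nh)$ and $y(nh)\le y_n$ (handling $\xi\ge y_n$ with Lemma~\ref{lem:LemIteration} and $\xi\ge y(nh)$ with Lemma~\ref{lem:LemGron} separately, then dividing by $E(h)<1$ to unify), whereas you short-circuit this by observing that $\xi_n\ge\min\{y_n,y(nh)\}$ is bounded below in both cases by the weaker discrete estimate with the $E(h)$ prefactor; that is a cleaner presentation of the same computation. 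You also obtain the factor $E(h)^m$ rather than the paper's $E(h)$ in the Gronwall coefficient $A$; this is in fact the algebraically correct power after raising Lemma~\ref{lem:LemIteration}'s bound to the $m$-th power, and — as you note — makes no difference to the stated asymptotic rate since $E(h)=O(1)$ as $h\to0^+$.
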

\begin{proof}
First, we can use the Langrange's mean-value theorem
\begin{equation}
	\left|y(nh)^{m+1}-y_n^{m+1}\right| = (m+1) \xi^m \left|y(nh)-y_n\right|, \quad m>0,
\label{eqn:xi}
\end{equation}
where $\xi$ is between $y(nh)$ and $y_n$. Now, we use the above estimate with (\ref{eqn:IntEq}) and (\ref{eqn:LocCons}) to obtain
\begin{equation}
	(m+1) \xi^m |e_n| \leq \left|y(nh)^{m+1}-y_{n}^{m+1}\right| \leq h W D \sum_{i=1}^{n-1} |e_i| + \delta.
\end{equation}

Now, we have to consider two cases depending on the relative size of $y(nh)$ and $y_n$. If $y_n \leq y(nh)$, then $\xi \geq y_n$ and from Lemma \ref{lem:LemIteration} it follows that
\begin{equation}
m C \; nh \left(1-\frac{m+1}{m}\frac{\epsilon(h)}{h^\frac{m+1}{m}} \right) |e_n|\leq (m+1)  |e_n| \leq h W D\sum_{i=1}^{n-1} |e_i| + \delta.
\end{equation}
Eventually, defining 
\begin{equation}
	E(h) := 1-\frac{m+1}{m}\frac{\epsilon(h)}{h^\frac{m+1}{m}} 
\end{equation}
we can write
\begin{equation}
|e_n| \leq \frac{1}{n} \left(\frac{1}{m}\frac{WD}{C E(h)}\sum_{i=1}^{n-1} |e_i| + \frac{1}{m C}\frac{\delta}{h E(h)}\right).
\end{equation}
On the other hand, if $y(nh)\leq y_n$, then $\xi \geq y(nh)$ and
\begin{equation}
m C \; nh\leq (m+1)  |e_n| \leq h W D\sum_{i=1}^{n-1} |e_i| + \delta.
\label{eqn:enEst}
\end{equation}
where in the first inequality we have used Lemma \ref{lem:LemGron}. Hence
\begin{equation}
|e_n| \leq \frac{1}{n} \left(\frac{1}{m}\frac{WD}{C}\sum_{i=1}^{n-1} |e_i| + \frac{1}{m C}\frac{\delta}{h}\right) \leq \frac{1}{n} \left(\frac{1}{m}\frac{WD}{C E(h)}\sum_{i=1}^{n-1} |e_i| + \frac{1}{m C}\frac{\delta}{h E(h)}\right).
\end{equation}
since $E(h) < 1$ due to the assumption. It follows that regardless of the relative size of $y(nh)$ and $y_n$, the error term satisfies the above recurrence inequality. Now, we can invoke the Lemma \ref{lem:LemRecur} to conclude that since $E(h) = O(1)$ as $h\rightarrow 0^+$ we have
\begin{equation}
	|e_n| \leq \text{const.} \times \frac{\max\{e_1,\delta h^{-1}\}}{n^{1-A}} = \text{const.} \times \frac{\max\{h^p,\delta h^{-1}\}}{h^{A-1}} \frac{1}{(nh)^{1-A}},
\end{equation}
where $A:=\frac{1}{m}\frac{WD}{C E(h)}$. Now, when $h\rightarrow 0^+$ with $nh \rightarrow \text{const.}$ we obtain the assertion. The proof is complete.
\end{proof}
The analysis of the above proof can lead to a stronger convergence result for a specific family of quadratures. 
\begin{cor}
Let $y=y(x)$ be a solution of (\ref{eqn:IntEq}) with (\ref{eqn:KBound}). Moreover, choose a quadrature with $\delta_n \leq 0$ for each $n=1,2,...,N$. If the starting value $y_1$ has $|e_1| = O(h^p)$ for some $p>0$, then 
\begin{equation}
	\left|e_n\right| \leq  \text{const.} \times h^{1-\frac{1}{m}\frac{WD}{C }}\max\{h^p,\delta(h) h^{-1}\} \quad \text{as} \quad h\rightarrow 0^+ \quad \text{with} \quad nh \rightarrow x_n.
\label{eqn:Rate2}
\end{equation}
\label{cor:Conv}
\end{cor}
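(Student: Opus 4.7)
The plan is to realize the corollary as a specialization of the argument for Theorem \ref{thm:Conv} in which only the second of its two cases is ever invoked. The savings come precisely from avoiding the discrete lower bound of Lemma \ref{lem:LemIteration} (with its attendant $E(h)$ factor) and using instead the continuous estimate of Lemma \ref{lem:LemGron} throughout.

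First I would pick $y_1$ satisfying simultaneously $|e_1| = O(h^p)$ and $y_1 \geq y(h)$; such a choice is available since only the absolute size of $e_1$ is constrained. Under the hypothesis $\delta_n \leq 0$ for every $n$, Proposition \ref{prop:Mono} then yields $y(nh) \leq y_n$ for all $n \geq 2$. Hence in the Lagrange identity (\ref{eqn:xi}) we always have $\xi \geq y(nh)$, and the case split that produced the $E(h)$ factor in the proof of Theorem \ref{thm:Conv} never occurs. Applying Lemma \ref{lem:LemGron} with $\mu = \nu = 0$ (which is legitimate because $K \geq C$ gives $y(x)^{m+1} \geq C \int_0^x y(t)\,dt$) supplies the lower bound $\xi^m \geq y(nh)^m \geq \frac{m C}{m+1}(nh)$.

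Substituting this into the standard consistency estimate
\begin{equation*}
(m+1)\xi^m |e_n| \leq |y(nh)^{m+1}-y_n^{m+1}| \leq h W D \sum_{i=1}^{n-1}|e_i| + \delta(h),
\end{equation*}
dividing by $n$, and collecting constants produces the recurrence
\begin{equation*}
|e_n| \leq \frac{1}{n}\left(\frac{1}{m}\frac{WD}{C}\sum_{i=1}^{n-1}|e_i| + \frac{1}{mC}\frac{\delta(h)}{h}\right),
\end{equation*}
which has exactly the form (\ref{eqn:LemRecur}) with $A := \frac{1}{m}\frac{WD}{C}$ (no $E(h)$ factor). Invoking Lemma \ref{lem:LemRecur}, rewriting $n^{-(1-A)} = h^{1-A}/(nh)^{1-A}$, and letting $h \to 0^+$ with $nh \to x_n$ yields (\ref{eqn:Rate2}). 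The only mild subtlety is the opening step of reconciling $|e_1| = O(h^p)$ with Proposition \ref{prop:Mono}'s one-sided requirement; once that is arranged, the remainder is a cleaner transcription of the computations already carried out in the proof of Theorem \ref{thm:Conv}.
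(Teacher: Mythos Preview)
Your proposal is correct and follows essentially the same route as the paper: invoke Proposition~\ref{prop:Mono} under $\delta_n\leq 0$ to force $y(nh)\leq y_n$, so that $\xi\geq y(nh)$ in (\ref{eqn:xi}) and only the second case of Theorem~\ref{thm:Conv} is needed, whence (\ref{eqn:enEst}) with Lemma~\ref{lem:LemGron} and Lemma~\ref{lem:LemRecur} give the bound without the $E(h)$ factor. Your explicit remark that one must arrange $y_1\geq y(h)$ to meet the hypothesis of Proposition~\ref{prop:Mono} is a point the paper leaves implicit.
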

\begin{proof}
If $\delta_n \leq 0$, then by Proposition \ref{prop:Mono} we have $y(nh) \leq y_n$. The proof follows the same steps as in Theorem \ref{thm:Conv} with the difference that we always have $\xi \geq y(nh)$ in (\ref{eqn:xi}). Hence the conclusion can be drawn from (\ref{eqn:enEst}) without any subsequent estimates. 
\end{proof}

\section{Construction of a quadrature and numerical examples}
In this section we will construct a convergent numerical method for solving (\ref{eqn:NumMet}). By Lemma \ref{lem:Asym}, we can identify several problems in numerically solving this integral equation. For example, when $m>1$ the non-smooth behaviour of the solution at $x=0$ can severely slow down the convergence. Recall that for a smooth function, its Newton-Cotes quadrature has an error bounded by the value of a sufficiently high derivative. If the considered function is not smooth enough, the order of the method can be reduced \cite{Plo17,Ded05}. On the other hand, when $0<m<1$ the Corollary \ref{cor:Conv} can give very weak estimates on the convergence rate. Having these remarks in mind, in what follows, we will illustrate the above results by constructing a numerical method for the case $m\geq 1$ leaving the other case for the future work.

To maximize the theoretical convergence rate we will use a quadrature with $\delta_{n}(h) \leq 0$ and use Corollary \ref{cor:Conv}. Notice that $y(x) \propto x^\frac{1}{m}$ so that for $m\geq 1$ the solution is locally concave. As a quadrature we choose the midpoint method which is of second order for sufficiently smooth functions. However, near $x=0$ the method will be only $O(h^{1+\frac{1}{m}})$. In order to see this fact we use the result from \cite{Ded05} stating that for any function with bounded variation the local consistency error is the following
\begin{equation}
	\left|\int_0^h f(t) dt - h f\left(\frac{h}{2}\right)\right| = \frac{h}{2} V_0^h (f).
\end{equation}
Now, due to the monotonicity we have $V_0^h (t^\frac{1}{m}) = h^\frac{1}{m}$, which states that the quadrature has order $1+\frac{1}{m}$. Note that this is in contrast with usual estimate for smooth functions.

Moreover, we can restrict ourselves only to an arbitrary and fixed interval $[0,\xi]$, so that the derivative singularity at the origin would be integrated. For the rest of the interval any other quadrature can be used since the nonlinearity of the equation (\ref{eqn:IntEq}) is of the Lipschitz class.

In order to convince oneself that the midpoint quadrature will yield a greater area under a curve than the exact integral consider an arbitrary interval $[x_i, x_{i+1}]$. Construct a line tangent to $x_{i+\frac{1}{2}}$ and notice that due to concavity, the area of a trapezium generated by it is greater than the are area under $y$. On the other hand, the trapezium theorem states that its area is equal to the area of a rectangle spanned by its height and the midline. This precisely is the midpoint quadrature. These considerations can be confirmed by the standard error analysis but this reasoning requires existence of a second derivative. 

Before we construct the quadrature we have to choose the starting value for the iteration (\ref{eqn:NumMet}). We have several ways of doing this. Keeping in mind that we have to choose $y_1 \geq y(h)$, we can try to bound the defining integral equation by some simple, not necessarily open, quadrature. The simplest choice is to use the rectangle rule 
\begin{equation}
	y(h)^{m+1} = \int_0^h K(h,t) y(t) dt \leq y(h) \int_0^h K(h,t) dt ,
\end{equation}
because $y$ is increasing and $K$ positive. We can thus define the initial step for the iteration as
\begin{equation}
	y_1 := \left(\int_0^h K(h,t) dt\right)^\frac{1}{m}.
\label{eqn:y1}
\end{equation}
This choice has a disadvantage of producing a large starting error $O(h^\frac{1}{m})$, since by Lemma \ref{lem:Asym} we have
\begin{equation}
	\left|y(h) - y_1 \right| \sim \left(1-\left(\frac{m}{m+1}\right)^\frac{1}{m}\right)(Ch)^\frac{1}{m} \quad \text{as} \quad h\rightarrow 0^+.
\end{equation}
Therefore we can put $p=\frac{1}{m}$ in the Corollary \ref{cor:Conv}. In order to increase the accuracy we can always use Richardson's extrapolation and we implicitly assume that it is utilized in our quadrature.

Having made all the preparations we can propose the following iteration scheme to solve (\ref{eqn:IntEq})
\begin{equation}
	y_{2n+k}^{m+1} = \frac{1}{2}h K_{2n+k,k} \;y_k + 2 h \sum_{i=1}^{n} K_{2n+k,2i+k-1} \; y_{2i+k-1}, \quad k\in\left\{0,1\right\}, \quad n > 1.
\label{eqn:Midpoint}
\end{equation}
The first term $y_1$ is calculated by the formula (\ref{eqn:y1}) and $y_0 = 0$. This quadrature uses intervals of length $2h$ and approximates the integral by the midpoint rule. Therefore, we have to distinguish between the even and odd steps ($k=0,1$ respectively) with regard to whether or not the terminal point is a midpoint of the interval. The first term on the right-hand side of the above formula is nonzero only in the odd term and is a result of a trapezoid rule applied to the interval $[0,h]$. We will illustrate the above results with some numerical examples. \\

\noindent\textbf{Example 1.} As a first example we consider the following equation
\begin{equation}
	y(x)^{m+1} = \int_0^x y(t)dt, \quad x\in[0,1],
\label{eqn:Ex1}
\end{equation}
which can be readily solved with
\begin{equation}
y(x) = \left(\frac{m}{m+1}\;x\right)^\frac{1}{m}.
\end{equation}
Here, $K(x,t) \equiv C=D=1$ and hence, thanks to (\ref{eqn:Midpoint}) we have $W =2$. From (\ref{eqn:Rate}) we immediately deduce that the scheme is convergent with order at least equal to $1+\frac{1}{m}-\frac{2}{m}=1=\frac{1}{m}$. Recall, that since the solution is proportional to $x^{\frac{1}{m}}$, the midpoint method will converge with an order of $1+\frac{1}{m}$.

The results for numerical simulations are given in Tab. \ref{tab:Ex1}. We can see that the real order of convergence depends very weakly on $m$ and is close to $1$. Note that we are comparing the exact with numerical solution at a fixed point $x=0.001$. In order to compute the order we use the standard technique of linear regression on a log-log plot of the error versus the iteration step which, in our simulation, changes from $10^{-1}$ to $10^{-5}$. Notice that the integration is exact for $m=1$ which is not surprising since midpoint rule is exact for the linear functions. \\

\begin{table}
	\centering
	\begin{tabular}{ccccccc}
		\toprule
		m	  &  $1$ & $1.5$ & $2$ & $10$ & $10^2$ & $10^3$ \\
		\midrule
		Order & $\infty$ & 1.000 & 0.999 & 0.98 & 0.927 & 0.918  \\
		Est. order & 0 & $0.333$ & $0.500$ & $0.900$ & $0.990$ & $0.999$ \\
		\bottomrule
	\end{tabular}
	\label{tab:Ex1}
	\caption{Simulated order of convergence of the midpoint rule (\ref{eqn:Midpoint}) applied to (\ref{eqn:Ex1}). Calculation has been done at $x=0.001$.}
\end{table}

\noindent\textbf{Example 2.} The next equation we consider is the following equation with a convolution kernel
\begin{equation}
y^{m+1}(x) = \int_0^x e^{x-t} y(t)dt, \quad x\in[0,X],
\label{eqn:Ex2}
\end{equation}
which is solved by
\begin{equation}
y(x) = e^\frac{x}{m+1}\left(1-e^{-\frac{m}{m+1}x}\right)^\frac{1}{m}.
\end{equation}
Here, $C=1$, $D=e^{X}$ and $W = 2$, hence the order of convergence can be estimated as equal to at least $1+\frac{1}{m}-\frac{2}{m} e^{X} > 1-\frac{1}{m} $. Choosing a sufficiently small $X$ assures that the method is convergent (specifically $X<\ln \frac{m+1}{2}$). Results of the simulations are given in Tab. \ref{tab:Ex2}. Once again, the order of convergence is very close to $1$.

\begin{table}
	\centering
	\begin{tabular}{ccccccc}
		\toprule
		m	  & $1$ & $1.5$ & $2$ & $10$ & $10^2$ & $10^3$ \\
		\midrule
		Order & 0.999 & 1.000 & 1.000 & 0.980 & 0.927 & 0.918 \\
		Est. order & $--$ & 0.332 & 0.499 & 0.899 & 0.990 & 0.999 \\
		\bottomrule
	\end{tabular}
	\label{tab:Ex2}
	\caption{Simulated order of convergence of the Midpoint rule (\ref{eqn:Midpoint}) applied to (\ref{eqn:Ex2}). Calculation has been done at $x=0.001$.}
\end{table}

\section{Conclusion}
We have shown that an explicit method for solving the Volterra integral equation with a power-type nonlinearity is convergent. The subjects of the future work consist of generalizing our results to the implicit schemes for integral equations with kernels bounded from below by an arbitrary power functions. 

\section*{Acknowledgements}
This research was supported by the National Science Centre, Poland under the project with a signature NCN $2015/17/D/ST1/00625$.\\

\noindent\L. P. would like to thank Prof. W. Okrasi{\'n}ski for a lot of fruitful talks that shaped the final form and quality of the paper.

\end{document}